\newtheorem{theorem}{Theorem}
\newtheorem{lemma}[theorem]{Lemma}
\newtheorem{corollary}{Corollary}
\newtheorem{remark}{Remark}
\renewcommand{\Re}{\mathrm{Re}}
\renewcommand{\Im}{\mathrm{Im}}
\theoremstyle{definition}
\newcommand{\ov}{\overline}
\newcommand{\mc}{\mathcal}
\newcommand{\mr}{\mathrm}
\newcommand{\md}{\,\mathrm{d}}
\newcommand{\wh}{\widehat}
\newcommand{\wt}{\widetilde}
\newcommand{\supp}{\mathrm{supp}\:}
\declaretheoremstyle
    [headformat={\NOTE}, 
    notebraces={}{}, 
    notefont=\bfseries, 
    preheadhook=\def\thmt@space{}, 
    numbered=no
    ]{namedtheorem}
\begin{document}

\author{Tianyu Zhao}

\title[Conditional estimates on the argument of Dirichlet $L$-functions]{Conditional estimates on the argument of Dirichlet $L$-functions with applications to low-lying zeros}

\address{
    Department of Mathematics, The Ohio State University, 231 West 18th
    Ave, Columbus, OH 43210, USA.
}
\email{zhao.3709@buckeyemail.osu.edu}

\subjclass[2020]{11M06, 11M26}
\keywords{Dirichlet $L$-functions, Beurling--Selberg extremal functions, low-lying zeros}

\begin{abstract}
    Under the generalized Riemann hypothesis, we use Beurling--Selberg extremal functions to bound the mean and mean square of the argument of Dirichlet $L$-functions to a large prime modulus $q$. As applications, we give alternative proofs of several results on low-lying zeros of $L(s,\chi)$ and obtain a new lower bound on the proportion of $L(s,\chi)$ modulo $q$ with zeros close to the central point $s=1/2$. In particular, we show conditionally that for any $\beta>1/4$, there exist a positive proportion of Dirichlet $L$-functions whose first zero has height less than $\beta$ times the average spacing between consecutive zeros.
\end{abstract}

\maketitle

\section{Introduction}

Let $L(s,\chi)$ be the Dirichlet $L$-function associated to a primitive Dirichlet character $\chi$ modulo an odd prime $q$. If $T$ does not coincide with the ordinate of any non-trivial zero of $L(s,\chi)$, define
\[
S(T,\chi):=\frac{1}{\pi}\mathrm{arg}\: L\left(\frac{1}{2}+iT,\chi\right)=-\frac{1}{\pi}\int_{1/2}^\infty \Im \frac{L'}{L}(\sigma+iT,\chi)\md \sigma,
\]
otherwise we set $S(T,\chi):=\lim_{\varepsilon\to 0}(S(T+\varepsilon,\chi)+S(T-\varepsilon,\chi))/2$. This paper concerns the mean and mean square of the quantity
\[
\wt{S}(T,\chi):=S(T,\chi)+S(T,\ov{\chi}),
\]
which appears in the Riemann--von Mangoldt formula as the principal error term (see, e.g., \cite[Corollary 14.6]{MV}):
\begin{equation}\label{zero counting v1}
    N(T,\chi)=\frac{T}{\pi}\log \frac{qT}{2\pi e}+\wt{S}(T,\chi)-\frac{\chi(-1)}{4}+O\left(\frac{1}{T+1}\right), \hspace{0.5cm} T>0. 
\end{equation}
Here $N(T,\chi)$ counts the number of zeros $\rho_\chi=\beta_\chi+i\gamma_\chi$ of $L(s,\chi)$ with $0<\beta_\chi<1$ and $-T\leq \gamma_\chi\leq T$ (any zero with ordinate exactly equal to $\pm T$ is counted with weight $1/2$). For the sake of brevity we shall use the notations
\[
\mathbb{E}[\wt{S}(T,\chi)]:=\frac{1}{q-2} \sum_{\substack{\chi \bmod q \\ \chi\neq \chi_0}} \wt{S}(T,\chi) \quad\text{and}\quad \mathbb{E}[\wt{S}(T,\chi)^2]:=\frac{1}{q-2} \sum_{\substack{\chi \bmod q \\ \chi\neq \chi_0}} \wt{S}(T,\chi)^2,
\]
where $\chi_0$ stands for the principal character modulo $q$. We shall always assume that $T>0$ and $q$ is large.

It follows from classical arguments that $S(T,\chi)=O(\log q(T+1))$. Assuming the generalized Riemann hypothesis (GRH), Selberg \cite[Theorem 6]{Sel} proved that
\[
S(T,\chi)=O\left(\frac{\log q(T+1)}{\log\log q(T+3)}\right)
\]
uniformly in $q$ and $T>0$. More recently, the use of Beurling--Selberg extremal functions allowed Goldston and Gonek \cite[Theorem 2]{GolGon} to show, under the Riemann hypothesis, that $|S(T)|\leq (\frac{1}{2}+o(1))\frac{\log T}{\log\log T}$ in the case of the Riemann zeta-function $\zeta(s)$, and upon standard modifications their method would also produce a bound on $S(T,\chi)$ with the same leading constant under GRH. Later, Carneiro, Chandee and Milinovich \cite[Theorem 2]{CCM2013} managed to sharpen Goldston and Gonek's bound by a factor of two by working with more complicated extremal functions. Subsequently, the same authors \cite{CCM2015} gave a much simpler proof for self-dual $L$-functions such as $\zeta(s)$ and quadratic Dirichlet $L$-functions by exploiting symmetry of their zeros.

Substantial cancellations occur when we average over all $\chi\bmod q$. For fixed $\varepsilon>0$ and any $T\leq q^{1/4-\varepsilon}$, Selberg \cite[Theorem 8]{Sel} established that
\begin{equation}\label{selberg mean S(t,chi)}
    \mathbb{E}[S(T,\chi)]=O(1).
\end{equation}
Since $\mathbb{E}[\wt{S}(T,\chi)]=2\mathbb{E}[S(T,\chi)]$, this bound together with \eqref{zero counting v1} implies that the average spacing between consecutive zeros with height $\leq 1$, say, is $\frac{2\pi}{\log q}$. Under GRH, the $O(1)$ in \eqref{selberg mean S(t,chi)} can be refined to $O(\frac{\log q(T+1)}{\log(q\log(T+3))})$, valid for all $q$ and $T>0$, due to an earlier result of Titchmarsh \cite{Titch}. In this note, we first sharpen Titchmarsh's bound by applying the method of Carneiro, Chandee and Milinovich in \cite{CCM2015}.

\begin{theorem}\label{theorem 1}
    Assuming GRH,
    \begin{align*}
        \left|\mathbb{E}[\wt{S}(T,\chi)]\right| \leq & \frac{1}{2}+\frac{\log(T+1)}{2\log (q\log(T+3))}+\mc{E}(q,T)
    \end{align*}
    where
    \[
    \mc{E}(q,T)\ll \frac{\log q+ \log(T+1)\log\log(q\log(T+3))}{(\log q)^2+(\log\log(T+3))^2}
    \]
    with the implied constant being uniform in $q$ and $T>0$. In particular, as $q\to \infty$, $|\mathbb{E}[\wt{S}(T,\chi)]|\leq 1/2+o(1)$ if $T\ll q^{o(1)}$.
\end{theorem}

\begin{remark}
    In our applications we are primarily interested in the case when $T$ is small (to be precise, $T\ll \frac{1}{\log q}$). In the opposite regime where $q$ is negligibly small compared to $T$, our bound has shape $\frac{1}{2}\frac{\log T}{\log \log T}+O(\frac{\log T \log\log\log T}{(\log\log T)^2})$, resembling the bound on $2S(T,\chi)$ for individual Dirichlet $L$-functions in the $T$-aspect (see \cite[Theorem 5]{CCM2015}).
\end{remark}

This immediately recovers a number of existing results for low-lying zeros.

\begin{corollary}\label{corollary: lowest zero}
    Let $\mr{ord}_{s=1/2}L(s,\chi)$ denote the multiplicity of the zero (if any) of $L(s,\chi)$ at $s=1/2$ and $|\gamma_{\chi,0}|=\min|\gamma_\chi|$ the height of the lowest non-trivial zero of $L(s,\chi)$. Assuming GRH, 
    \begin{align}
        \frac{1}{q-2} \sum_{\substack{\chi \bmod q \\ \chi\neq \chi_0}} \underset{s=1/2}{\mr{ord}} L(s,\chi)\leq& \frac{1}{2}+O\left(\frac{1}{\log q}\right)\label{nonvanishing 1/2},\\
        \min_{\substack{\chi\bmod q\\\chi\neq \chi_0}} \frac{|\gamma_{\chi,0}|\log q}{2\pi}\leq& \frac{1}{4}+O\left(\frac{1}{\log q}\right)\label{min gamma chi},\\
        \max_{\substack{\chi\bmod q\\\chi\neq \chi_0}} \frac{|\gamma_{\chi,0}|\log q}{2\pi}\geq& \frac{1}{4}+O\left(\frac{1}{\log q}\right) \label{max gamma chi}.
    \end{align}
\end{corollary}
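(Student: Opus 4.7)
The plan is to derive the three estimates of Corollary~\ref{corollary: lowest zero} by combining Theorem~\ref{main thm mean S(t,chi)} with the averaged Riemann--von Mangoldt formula~\eqref{N(T) in terms of S(T)}. Summing~\eqref{N(T) in terms of S(T)} over non-principal characters and using $\sum_{\chi \neq \chi_0}\chi(-1) = -1$ for $q$ an odd prime, we obtain
\[
\mathbb{E}[N(T,\chi)] = \frac{T}{\pi}\log\frac{qT}{2\pi e} + \mathbb{E}[\wt S(T,\chi)] + O\!\left(\frac{1}{q}\right) + O\!\left(\frac{1}{T+1}\right).
\]

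The bound~\eqref{nonvanishing 1/2} is the cleanest; I would bypass~\eqref{N(T) in terms of S(T)} altogether and observe that $\wt S(0^+,\chi) = \mr{ord}_{s=1/2}L(s,\chi)$ for every $\chi$. Indeed, if $L(\frac{1}{2}+iT,\chi) \sim c(iT)^{m_\chi}$ as $T \to 0^+$ (with $m_\chi := \mr{ord}_{s=1/2}L(s,\chi)$), then $\arg L(\frac{1}{2}+iT,\chi) \to m_\chi\pi/2 + \arg c$; the functional identity $L(\ov s,\chi) = \ov{L(s,\ov\chi)}$ forces $\arg L(\frac{1}{2}+iT,\ov\chi) \to m_\chi\pi/2 - \arg c$, so summing and dividing by $\pi$ gives $m_\chi$. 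Passing to the limit $T \to 0^+$ in the bound of Theorem~\ref{main thm mean S(t,chi)} (where $\log(T+1)/[2\log(q\log(T+3))]$ vanishes and $\mc E(q,0^+) \ll 1/\log q$) yields~\eqref{nonvanishing 1/2} at once.

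For~\eqref{min gamma chi} and~\eqref{max gamma chi}, I would pick $T$ close to $\pi/(2\log q)$, the value at which the main term $\phi(T) := \frac{T}{\pi}\log(qT/2\pi e)$ is approximately $1/2$. For~\eqref{min gamma chi}, take $T = T_+$ slightly above $\pi/(2\log q)$ so that $\phi(T_+) > 1/2 + C/\log q$ for a sufficiently large constant $C$; by Theorem~\ref{main thm mean S(t,chi)}, $\mathbb{E}[\wt S(T_+,\chi)] \ge -1/2 - O(1/\log q)$, hence $\mathbb{E}[N(T_+,\chi)] > 0$. Since $N(T_+,\chi) \in \mathbb{Z}_{\ge 0}$, some non-principal $\chi$ satisfies $N(T_+,\chi) \ge 1$, i.e., $|\gamma_{\chi,0}| \le T_+$. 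For~\eqref{max gamma chi}, take $T = T_-$ slightly below $\pi/(2\log q)$ and use $\mathbb{E}[\wt S(T_-,\chi)] \le 1/2 + O(1/\log q)$ to obtain $\mathbb{E}[N(T_-,\chi)] < 1$, forcing some $\chi$ with $N(T_-,\chi) = 0$ and thus $|\gamma_{\chi,0}| > T_-$.

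The main obstacle will be balancing the error $O(1/(T+1))$ in~\eqref{N(T) in terms of S(T)} against the sharp $O(1/\log q)$ normalized error claimed in~\eqref{min gamma chi}--\eqref{max gamma chi}: for $T$ of order $1/\log q$, the stated error is merely $O(1)$. I would address this by invoking the sharper error $O(1/(1+qT))$ that follows from a more careful expansion of the Gamma-factor contributions in the zero-counting function (which is of size $O(1/\log q)$ for $T$ of the relevant order), and by carefully absorbing the $\log\log q/\log q$ secondary correction to $\phi(\pi/(2\log q))$ when choosing $T_\pm$.
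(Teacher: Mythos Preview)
Your overall strategy matches the paper's: average the zero-counting formula, plug in Theorem~\ref{main thm mean S(t,chi)}, and choose $T$ appropriately. The one place where you diverge is in which zero-counting formula you start from, and this is exactly where your ``main obstacle'' comes from. The paper does not use the asymptotic formula~\eqref{N(T) in terms of S(T)} at all; instead it uses the exact identity~\eqref{zero counting},
\[
N(T,\chi)=\frac{T}{\pi}\log\frac{q}{\pi}+\wt S(T,\chi)+\frac{1}{2\pi}\int_{-T}^{T}\Re\frac{\Gamma'}{\Gamma}\Bigl(\tfrac{1}{4}+\tfrac{\delta_\chi}{2}+i\tfrac{u}{2}\Bigr)\,du,
\]
in which the Gamma contribution is an exact integral, not an error term. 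For $T=O(1)$ this integral is trivially $O(T)$, hence $O(1/\log q)$ at $T\asymp 1/\log q$. This removes your obstacle entirely and also means the relevant linear term is the simpler $\frac{T}{\pi}\log\frac{q}{\pi}$, so $T=\frac{\pi}{2\log(q/\pi)}+O(1/(\log q)^2)$ suffices, with no $\log\log q$ correction to absorb.

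Your proposed fix, an error of size $O(1/(1+qT))$ from ``a more careful expansion of the Gamma-factor contributions,'' is not available as stated: the Gamma factor in the completed $L$-function depends only on $T$ and the parity $\delta_\chi$, not on $q$, so no $q$-improvement can come from Stirling. What you really want is precisely the exact formula~\eqref{zero counting}, not a sharper asymptotic. Your direct argument for~\eqref{nonvanishing 1/2} via $\wt S(0^+,\chi)=\mathrm{ord}_{s=1/2}L(s,\chi)$ is correct and is in fact equivalent to letting $T\to 0$ in~\eqref{zero counting}; the paper's phrasing just makes the limit manifestly clean.
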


\begin{proof}
    An application of the argument principle gives
    \begin{equation}\label{zero counting}     
        N(T,\chi)=\frac{T}{\pi}\log \frac{q}{\pi} + \wt{S}(T,\chi) + \frac{1}{2\pi} \int_{-T}^T \frac{\Gamma'}{\Gamma }\left(\frac{1}{4}+\frac{\delta_\chi}{2}+i\frac{u}{2}\right)\md u
    \end{equation}
    where $\delta_\chi=(1+\chi(-1))/2$. This is an exact formula as opposed to \eqref{zero counting v1}. The integral is plainly $O(T)$ if $T=O(1)$. Averaging both sides of \eqref{zero counting} over all non-principal characters modulo $q$, taking $T\to 0$ and applying Theorem~\ref{theorem 1}, we readily obtain \eqref{nonvanishing 1/2}. The other two assertions follow similarly by taking $\frac{T}{\pi}\log \frac{q}{\pi}=\frac{1}{2}+O(\frac{1}{\log q})$.
\end{proof} 

The first estimate \eqref{nonvanishing 1/2}, which implies conditionally that the proportion of non-vanishing at $s=1/2$ is at least $1/2-o(1)$ as $q\to \infty$, was first due to Murty \cite{Mur}. The second estimate \eqref{min gamma chi} asserts that the height of the lowest non-trivial zero of the family of $L(s,\chi)$ modulo $q$ is at most $1/4+o(1)$ times the average spacing. This was showed by Hughes and Rudnick \cite[Corollary 8.2]{HuRu} as a corollary of their well-known one-level density formula:
\begin{equation}\label{one-level density}
    \frac{1}{q-2}\sum_{\substack{\chi\bmod q \\ \chi\neq \chi_0}}\sum_{\rho_\chi}f\left(\frac{\log q}{2\pi}\frac{\rho_\chi-1/2}{i} \right)=\int_{-\infty}^\infty f(x)\md x+O\left(\frac{1}{\log q}\right)
\end{equation}
where $f$ is any suitable test function whose Fourier transform $\wh{f}(u):=\int_{-\infty}^\infty f(x)e^{-2\pi iux}\md x$ is compactly supported in $(-2,2)$. In fact all three estimates in Corollary~\ref{corollary: lowest zero} can be deduced from \eqref{one-level density}, but a different test function is required for each case in contrast to our current approach. We also remark that the $O(\frac{1}{\log q})$ error terms in these estimates are not best possible and have recently been refined slightly by the author in \cite[Theorems 4--6]{Zhao} (so that both of the $o(1)$ terms mentioned at the beginning of this paragraph can be removed).

Next we bound the mean square of $\wt{S}(T,\chi)$ in terms of the Beurling--Selberg functions $R^\pm(x)=R^\pm_{\Delta,T}(x)$, $\Delta>0$ (see \eqref{def Beurling-selberg} for the definition).

\begin{theorem}\label{theorem 2}
    Assuming GRH, 
    \begin{multline*}
        \mathbb{E}[\wt{S}(T,\chi)^2] \leq 2\left(\frac{\log q(T+1)}{2\pi\Delta}\right)^2 + 2\left\lceil \frac{e^{2\pi\Delta}}{q}\right\rceil \int_{\frac{\log 2}{2\pi}}^\Delta u \left(\wh{R^+}(u)^2+\wh{R^-}(u)^2\right)\md u\\
        +O\left(\left\lceil \frac{e^{2\pi\Delta}}{q}\right\rceil \min\left\{1,T^2+\Delta^{-2}\right\}+\frac{e^{2\pi\Delta}}{q\sqrt{\Delta}}+\frac{\log q(T+1)}{\Delta} \left(\frac{1}{\Delta}+\frac{e^{\pi\Delta}}{q}\right)\right)
    \end{multline*}
    where $\lceil x\rceil$ denotes the smallest integer greater than or equal to $x$. In particular, for $T\leq q^N$ where $N>0$ is fixed,
    \begin{equation}\label{second moment bound}
    \mathbb{E}[\wt{S}(T,\chi)^2] \leq \frac{2}{\pi^2}\min\{\log\log q,\log(T\log q+1)\}+O_N(1).
    \end{equation}
\end{theorem}

This should be compared to \cite[Theorem 9]{Sel} where Selberg obtained asymptotics of even moments of $S(T,\chi)$ without assuming GRH:
\[
\mathbb{E}[S(T,\chi)^{2n}]=\frac{(2n)!}{n!(2\pi)^{2n}}(\log\log q)^n+O\left((\log\log q)^{n-1/2}\right)
\]
where $n\in \mathbb{N}$ and $T\leq q^{1/4-\varepsilon}$. Taking $n=1$, this trivially yields an unconditional upper bound of $\frac{2}{\pi^2}\log\log q+O(\sqrt{\log\log q})$ for $\mathbb{E}[\wt{S}(T,\chi)^2]$ in the same range of $T$, which is almost as sharp as \eqref{second moment bound} for $T$ greater than any positive constant. However, observe that $\eqref{second moment bound}$ gives an $O(1)$ bound when $T\ll \frac{1}{\log q}$, which turns out crucial to establishing the existence of a positive proportion of $\chi\bmod q$ with low-lying zeros. In this direction, Selberg \cite[Theorem 3]{Sel} unconditionally proved a remarkable result that the proportion of $\chi\bmod q$ having a zero $1/2+i\gamma$ with $|\gamma|<\frac{2\pi\beta}{\log q}$ becomes positive and eventually converges to 1 as $\beta\to \infty$. (For this he used essentially the same method that allowed him to prove that a positive proportion of the non-trivial zeros of $\zeta(s)$ lie on the critical line.) More precisely, he showed that this proportion is at least $1-O(\beta^{-1/2})$ without providing an explicit expression in terms of $\beta$. Assuming GRH, Hughes and Rudnick \cite[Theorem 8.3]{HuRu} derived such an explicit lower bound using the variance of the one-level density:
\begin{multline}\label{Hughes Rudnick proportion}
    \liminf_{\substack{q\to \infty}} \frac{1}{q-2}\#\bigg\{\chi\neq \chi_0: \frac{|\gamma_{\chi,0}|\log q}{2\pi}<\beta\bigg\}
    \\
    \geq 1-\frac{3+\pi^2+72\beta^2-8\pi^2\beta^2+48\beta^4+16\pi^2\beta^4}{12\pi^2(4\beta^2-1)^2}.
\end{multline}
The right-hand side becomes positive for $\beta>0.633\ldots$ and converges to $0.891\ldots$ as $\beta\to \infty$, and so Selberg's unconditional result is in fact asymptotically stronger. This was improved to \cite[Theorem 7]{Zhao}
\begin{equation}\label{zhao proportion}
    \begin{cases}
        \dfrac{1}{1+\dfrac{3+\pi^2+72\beta^2-8\pi^2\beta^2+48\beta^4+16\pi^2\beta^4}{12\pi^2(4\beta^2-1)^2}}, & 1/2 <\beta<0.909\ldots,\\
        \dfrac{1}{1+\dfrac{0.193\ldots}{\beta^2}}, & \beta\geq 0.909\ldots,
    \end{cases}
\end{equation}
which indeed converges to 1 as $\beta\to \infty$, at a rate of $1-O(\beta^{-2})$. Moreover, it gives a positive proportion for $\beta>1/2$, compared to $0.633\ldots$. We now demonstrate that Theorem~\ref{theorem 2} allows us to further reduce $1/2$ down to $1/4$, which is the limit of currently available methods considering the $1/4$-barrier in \eqref{min gamma chi}.

\begin{corollary}\label{corollary: proportion}
    Assuming GRH, for all $\beta> 1/4$ we have 
    \begin{multline}\label{lower bound proportion}
        \liminf_{q\to \infty} \frac{1}{q-2}\#\bigg\{\chi\neq \chi_0:\frac{|\gamma_{\chi,0}|\log q}{2\pi}< \beta\bigg\}\\
        \geq \frac{4\beta^2-2\beta+1/4}{4\beta^2-2\beta+2+\displaystyle 2\int_{0}^1 u\left(\wh{R_{1,\beta}^{+}}(u)^2 +\wh{R_{1,\beta}^{-}}(u)^2\right)\md u}.
    \end{multline}  
\end{corollary}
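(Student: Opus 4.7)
My plan is to combine the mean and mean-square bounds of Theorems~\ref{main thm mean S(t,chi)} and~\ref{thm mean square S(t,chi)} via a Paley--Zygmund (Cauchy--Schwarz) argument applied to the zero-counting function $N(T,\chi)$. Setting $T=T_q:=2\pi\beta/\log q$, one has $\frac{T_q}{\pi}\log\frac{q}{\pi}\to 2\beta$, and \eqref{zero counting} gives
\[
N(T_q,\chi)=2\beta+\wt{S}(T_q,\chi)+O(1/\log q)
\]
uniformly in $\chi\neq\chi_0$. The event $|\gamma_{\chi,0}|\log q/(2\pi)<\beta$ coincides (up to a measure-zero boundary) with $N(T_q,\chi)\geq 1$, so Cauchy--Schwarz yields
\[
\frac{1}{q-2}\#\bigg\{\chi\neq\chi_0:\frac{|\gamma_{\chi,0}|\log q}{2\pi}<\beta\bigg\}\;\geq\;\frac{\mathbb{E}[N(T_q,\chi)]^2}{\mathbb{E}[N(T_q,\chi)^2]}.
\]

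Write $x_q:=\mathbb{E}[\wt{S}(T_q,\chi)]$ and $y_q:=\mathbb{E}[\wt{S}(T_q,\chi)^2]$; expanding the squares gives $\mathbb{E}[N(T_q,\chi)]=2\beta+x_q+o(1)$ and $\mathbb{E}[N(T_q,\chi)^2]=4\beta^2+4\beta x_q+y_q+o(1)$. Since $T_q\to 0$, Theorem~\ref{main thm mean S(t,chi)} yields $|x_q|\leq 1/2+o(1)$. For $y_q$, I apply Theorem~\ref{thm mean square S(t,chi)} with the choice $\Delta=(\log q)/(2\pi)$: this makes $e^{2\pi\Delta}=q$, $\lceil e^{2\pi\Delta}/q\rceil=1$, and $2(\log q(T_q+1)/(2\pi\Delta))^2\to 2$, while a direct computation shows that all remaining error terms are $O((\log q)^{-1/2})$. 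Using the natural scaling $R^\pm_{\Delta,T}(x)=R^\pm_{1,\Delta T}(\Delta x)$ of the Beurling--Selberg extremal functions, which (together with $\Delta T_q=\beta$) implies $\wh{R^\pm_{\Delta,T_q}}(u)=\Delta^{-1}\wh{R^\pm_{1,\beta}}(u/\Delta)$, the substitution $u\mapsto v=u/\Delta$ converts the integral in Theorem~\ref{thm mean square S(t,chi)} into $\int_{\log 2/(2\pi\Delta)}^1 v(\wh{R_{1,\beta}^+}(v)^2+\wh{R_{1,\beta}^-}(v)^2)\md v$, whose lower limit tends to $0$. Hence $y_q\leq Y_0+o(1)$ with $Y_0:=2+2\int_0^1 v(\wh{R_{1,\beta}^+}(v)^2+\wh{R_{1,\beta}^-}(v)^2)\md v$.

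It remains to minimise $\phi(x,y):=(2\beta+x)^2/(4\beta^2+4\beta x+y)$ on the feasible region $x\in[-1/2,1/2]$, $x^2\leq y\leq Y_0$. Since $\partial\phi/\partial y\leq 0$, the minimum occurs at $y=Y_0$; along that segment $\partial\phi/\partial x$ is proportional to $(2\beta+x)(Y_0+2\beta x)$, both factors being non-negative throughout $[-1/2,1/2]$ precisely when $\beta>1/4$ (so that $2\beta+x\geq 2\beta-1/2>0$ at the endpoint $x=-1/2$). Hence the minimum is attained at $(x,y)=(-1/2,Y_0)$, giving $\phi(-1/2,Y_0)=(2\beta-1/2)^2/(4\beta^2-2\beta+Y_0)$, which is exactly the right-hand side of \eqref{lower bound proportion}. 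The decisive step is this final optimisation: it is precisely at $x=-1/2$ that the $1/4$-threshold for $\beta$ emerges, mirroring the barrier in \eqref{min gamma chi}. The main technical burdens are the extremal-function scaling identity and the verification that every error term in Theorem~\ref{thm mean square S(t,chi)} vanishes in the limit at the chosen $\Delta$.
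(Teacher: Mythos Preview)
Your argument is correct and mirrors the paper's own proof: apply Cauchy--Schwarz (Paley--Zygmund) to $N(T_q,\chi)=2\beta+\wt{S}(T_q,\chi)+o(1)$, invoke Theorem~\ref{main thm mean S(t,chi)} for the first moment and Theorem~\ref{thm mean square S(t,chi)} with $\Delta=(\log q)/(2\pi)$ for the second, rescale the Beurling--Selberg integral via $\wh{R^\pm_{\Delta,T_q}}(u)=\Delta^{-1}\wh{R^\pm_{1,\beta}}(u/\Delta)$, and then optimise. One small imprecision: the non-negativity of the factor $Y_0+2\beta x$ on $[-1/2,1/2]$ does not follow from $\beta>1/4$ but from $Y_0\geq\beta$, which holds automatically since $Y_0\geq 2$ for the range of $\beta$ where this bound is of interest (the paper glosses over the same point as a ``quick calculus exercise'').
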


One can numerically compute the integral above for any given $\beta$. When $\beta=1/2$, for instance, we find that more than $10\%$ of $\chi\bmod q$ has a zero within half of the average spacing when $q$ is large. For a rough estimate of this integral, we see from Lemma~\ref{lemma: Beurling selberg} (vi) that it is
\[
=\int_{0}^1 2u\frac{\sin^2(2\pi \beta u)}{(\pi u)^2}\md u + O(1) = \frac{2}{\pi^2}\int_{0}^\beta \frac{\sin^2(2\pi u)}{u}\md u + O(1) = \frac{\log (\beta+1)}{\pi^2}+O(1).
\]
Hence the lower bound provided by \eqref{lower bound proportion} grows like $1-O(\frac{\log \beta}{\beta^2})$ as $\beta\to \infty$, which is slightly weaker than \eqref{zhao proportion} asymptotically. In fact, numerical computation indicates that \eqref{zhao proportion} outperforms \eqref{lower bound proportion} quickly after $\beta$ passes $1/2$ ($\approx 0.55$ to be precise).

Prior results in \cite{HuRu} and \cite{Zhao} rely on the so-called ``positivity technique" for detecting first zeros, which involves applying the explicit formula to a certain test function $f$ that is non-negative outside some bounded interval whose length is proportional to $\beta$, but at the same time $\wh{f}(0)<0$ and the support of $\wh{f}$ is restricted in order for the off-diagonal terms to make a negligible contribution when we square the prime sum in the explicit formula. Unfortunately, these conditions cannot be simultaneously satisfied when $\beta$ is too small, so we could only obtain a positive proportion when $\beta>1/2$. More concretely, one may think of the requirements on $f$ to be $f(x)\geq 0$ for $|x|\geq \beta$, $\wh{f}(0)<0$ and $\supp(\wh{f})\subset [-1,1]$. Such an $f$ fails to exist if $\beta\leq 1/2$, according to the work of Carneiro, Ismoilov and Ramos on the sign uncertainty principle (see Theorem 2 and the subsequent Remark (iii) in \cite{CIR}). Therefore, any attempt to improve this would have to circumvent such limitation from Fourier analysis. Our approach here boils down to bounding the mean and mean square of $\wt{S}(T,\chi)$ where $T$ is proportional to $\beta$. To this end we seek approximations $f$ for the characteristic function of the interval $[-T,T]$ and then apply the explicit formula to $f$ as before. Even though the limitation from number theory still exists, namely $\supp(\wh{f})$ must be restricted, no other essential conditions need to be imposed on $f$, and thankfully such approximations are available no matter how short the interval is, i.e., how small $\beta$ is.

We also briefly remark that the unconditional analogues of Theorems~\ref{theorem 1} and \ref{theorem 2} with applications to low-lying zeros have been worked out by G. Hiary and the author in \cite{HiaZha}.

We end this section by giving a quick derivation of Corollary~\ref{corollary: proportion}.

\begin{proof}[Proof of Corollary~\ref{corollary: proportion}]
    Denote by $\mc{Q}_\beta$ the set of characters in question, which is non-empty by \eqref{min gamma chi} since we are assuming $\beta>1/4$. Equivalently, $\mc{Q}_\beta=\{\chi\neq \chi_0:N(\frac{2\pi\beta}{\log q},\chi)>0\}=\{\chi\neq \chi_0: \wt{S}(\frac{2\pi\beta}{\log q},\chi)+2\beta+O(\frac{1}{\log q})>0\}$. By the Cauchy--Schwarz inequality,
    \begin{align*}
        \# \mc{Q}_\beta \geq &\frac{\left[\sum_{\chi\in \mc{Q}_\beta}\left(\wt{S}(\frac{2\pi\beta}{\log q},\chi)+2\beta+O(\frac{1}{\log q})\right)\right]^2}{\sum_{\chi\in \mc{Q}_\beta}\left(\wt{S}(\frac{2\pi\beta}{\log q},\chi)+2\beta+O(\frac{1}{\log q})\right)^2} \geq \frac{\left[\sum_{\chi\neq \chi_0}\left(\wt{S}(\frac{2\pi\beta}{\log q},\chi)+2\beta+O(\frac{1}{\log q})\right)\right]^2}{\sum_{\chi\neq \chi_0}\left(\wt{S}(\frac{2\pi\beta}{\log q},\chi)+2\beta+O(\frac{1}{\log q})\right)^2}.
    \end{align*}
    Using Theorem~\ref{theorem 1} we obtain
    \begin{align*}
        \liminf_{q\to \infty} \frac{\#\mc{Q}_\beta}{q-2}\geq & \liminf_{q\to \infty}  \frac{(2\beta+\mathbb{E}[\wt{S}(\frac{2\pi\beta}{\log q},\chi)])^2}{4\beta^2+4\beta\mathbb{E}[\wt{S}(\frac{2\pi\beta}{\log q},\chi)]+\mathbb{E}[\wt{S}(\frac{2\pi\beta}{\log q},\chi)^2]} \\
        \geq & \liminf_{q\to \infty} \min_{|\lambda|\leq 1/2} \frac{(2\beta+\lambda)^2}{4\beta^2+4\beta\lambda+\mathbb{E}[\wt{S}(\frac{2\pi\beta}{\log q},\chi)^2]}.
    \end{align*}
    A quick calculus exercise shows that this ratio is minimized at $\lambda=-1/2$. The corollary now follows from an application of Theorem~\ref{theorem 2} with $\Delta=\frac{\log q}{2\pi}$ and $T=\frac{2\pi\beta}{\log q}$ along with the observation that
    \[
    \int_{\frac{\log 2}{2\pi}}^\Delta u\wh{R_{\Delta,T}^{\pm}}(u)^2 \md u = \int_{\frac{\log 2}{\log q}}^1 u\wh{R_{1,\beta}^{\pm}}(u)^2 \md u = \int_{0}^1 u\wh{R_{1,\beta}^{\pm}}(u)^2 \md u+O\left(\frac{1}{\log q}\right).
    \]
\end{proof}

\section{Preparations and lemmas}
For $s\in\mathbb{C}$ let 
\[
B^\pm(s):=\left(\frac{\sin \pi s}{\pi}\right)^2\left(\frac{2}{s}+\sum_{n=1}^\infty \frac{1}{(s-n)^2}-\sum_{n=1}^\infty \frac{1}{(s+n)^2}\right)\pm \left(\frac{\sin \pi s}{\pi s}\right)^2,
\]
and then put
\begin{equation}\label{def Beurling-selberg}
    R_{\Delta,T}^{\pm}(s):=\frac{B^{\pm}(\Delta(T+s))+B^{\pm}(\Delta(T-s))}{2}
\end{equation}
where $\Delta>0$. The even real entire functions $R^{\pm}(s)=R_{\Delta,T}^{\pm}(s)$ are called the Beurling--Selberg majorant and minorant for the interval $[-T,T]$, respectively, and have found  numerous applications in analytic number theory. We first record some of their essential properties in the following lemma. The reader is referred to \cite{Vaa} for a detailed discussion on such functions and proofs of assertions in the lemma.

\begin{lemma}\label{lemma: Beurling selberg}
    The functions $R^{\pm}(s)$ satisfy the following properties:
    \begin{enumerate}[label=(\roman*)]
        \item $R^-(x)\leq \mathbbm{1}_{[-T,T]}(x)\leq R^+(x)$ for all $x\in \mathbb{R}$, where $\mathbbm{1}_{-[T,T]}$ stands for the characteristic function of $[-T,T]$ (normalized at the endpoints);

        \item $\displaystyle \int_{-\infty}^\infty \left|R^{\pm}(x)-\mathbbm{1}_{[-T,T]}(x)\right| \md x=\Delta^{-1}$;

        \item $R^{\pm}(s)\ll e^{2\pi \Delta |\Im\: s|}$;

        \item $R^{\pm}(x)\ll \min(1,\Delta^{-2}(|x|-T)^{-2})$ for $|x|>T$;

        \item $\wh{R^{\pm}}(u)=0$ for $|u|\geq \Delta$;

        \item $\wh{R^{\pm}}(u)=\dfrac{\sin(2\pi Tu)}{\pi u}+O(\Delta^{-1})$ for $|u|<\Delta$; in particular, $\wh{R^{\pm}}(0)=2T \pm \Delta^{-1}$.
    \end{enumerate}
\end{lemma}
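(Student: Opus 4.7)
My plan is to reduce every assertion to the analogous classical properties of Beurling's majorant and minorant $B^{\pm}(s)$ of the sign function on the real line. Specifically, I would take as known (or verify directly from the series defining $B^{\pm}$) the following four facts: $B^{\pm}$ are real entire of exponential type $2\pi$; $B^{-}(x)\leq \sgn(x)\leq B^{+}(x)$ for all $x\in\mathbb{R}$; $\int_{-\infty}^{\infty}(B^{+}(x)-\sgn(x))\md x=\int_{-\infty}^{\infty}(\sgn(x)-B^{-}(x))\md x=1$; and $B^{\pm}(x)-\sgn(x)\ll(1+x^{2})^{-1}$ as $|x|\to\infty$. The bridge to $R^{\pm}_{\Delta,T}$ is the elementary identity
\[
\mathbbm{1}_{[-T,T]}(x)=\tfrac{1}{2}\bigl(\sgn(T+x)+\sgn(T-x)\bigr)
\]
valid for $x\neq \pm T$ (and correctly normalized at the endpoints), which mirrors the structural definition of $R^{\pm}_{\Delta,T}$.

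With this reduction, assertions (i)--(iv) become essentially bookkeeping. Property (i) follows by summing the sandwich inequalities for $B^{\pm}$ evaluated at $\Delta(T+x)$ and $\Delta(T-x)$. For (ii), the difference $R^{\pm}(x)-\mathbbm{1}_{[-T,T]}(x)$ splits into two terms of the form $\tfrac{1}{2}(B^{\pm}(\Delta(T\pm x))-\sgn(T\pm x))$ which have constant sign by (i); integrating and changing variables $y=\Delta(T\pm x)$ turns each summand into $(2\Delta)^{-1}$, giving $\Delta^{-1}$. Property (iii) is immediate since a composition with an affine map multiplies exponential type by $\Delta$. For (iv), when $|x|>T$ the identity above forces $\sgn(T+x)+\sgn(T-x)=0$, so
\[
2R^{\pm}(x)=\bigl(B^{\pm}(\Delta(T+x))-\sgn(T+x)\bigr)+\bigl(B^{\pm}(\Delta(T-x))-\sgn(T-x)\bigr),
\]
and the two summands are $O((\Delta(T+|x|))^{-2})$ and $O((\Delta(|x|-T))^{-2})$ respectively; combining with the trivial bound from (i) yields the stated $\min(1,\Delta^{-2}(|x|-T)^{-2})$.

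For (v), the Paley--Wiener theorem applies: $R^{\pm}$ is entire of exponential type $2\pi\Delta$ by (iii), and $R^{\pm}\in L^{2}(\mathbb{R})$ because $R^{\pm}-\mathbbm{1}_{[-T,T]}$ is both $L^{1}$ (by (ii)) and $L^{\infty}$ (by (i) and (iv)), so $\wh{R^{\pm}}$ is supported in $[-\Delta,\Delta]$. For (vi), the Fourier transform of $\mathbbm{1}_{[-T,T]}$ is exactly $\sin(2\pi Tu)/(\pi u)$, and the discrepancy is controlled by
\[
|\wh{R^{\pm}}(u)-\wh{\mathbbm{1}_{[-T,T]}}(u)|\leq \|R^{\pm}-\mathbbm{1}_{[-T,T]}\|_{1}=\Delta^{-1}
\]
for all real $u$. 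Setting $u=0$ gives $\wh{R^{\pm}}(0)=\int R^{\pm}=2T\pm\Delta^{-1}$, confirming the normalization.

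The genuinely hard part, which I am assuming as input, is the construction and verification of the classical properties of $B^{\pm}$ themselves -- in particular the simultaneous requirements that $B^{+}-\sgn$ be nonnegative, of exponential type $2\pi$, and extremal in $L^{1}$. These rest on Beurling's interpolation formula at the half-integers and are the substantive content of Vaaler's exposition in \cite{Vaa}. Once those are granted, the passage from $B^{\pm}$ to the two-sided construction $R^{\pm}_{\Delta,T}$ is, as outlined above, a routine algebraic unfolding combined with one application of Paley--Wiener.
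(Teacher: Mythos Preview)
Your proposal is correct, and in fact the paper offers no proof at all beyond referring the reader to Vaaler \cite{Vaa} for the properties of $B^{\pm}$ and $R^{\pm}$. Your reduction of (i)--(vi) to the classical Beurling--Selberg facts about $B^{\pm}$, followed by the same citation, is exactly in the spirit of the paper and supplies the bridging details the paper omits.
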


We shall also use Weil's explicit formula specialized to $L(s,\chi)$, which relates a sum over the non-trivial zeros of $L(s,\chi)$ to a sum over primes.

\begin{lemma}\label{lemma: explicit formula}
    Suppose that $f(s)$ is analytic in the strip $|\Im\:s|<1/2+\varepsilon$ for some $\varepsilon>0$ and that $f(s)\ll (1+|s|)^{-1-\delta}$ for some $\delta>0$ as $|\Re(s)|\to \infty$. Then
    \begin{multline}\label{explicit formula}
        \sum_{\rho_\chi} f\left(\frac{\rho_\chi-1/2}{i}\right)=\frac{\wh{f}(0)}{2\pi} \log \frac{q}{\pi} + \frac{1}{2\pi} \int_{-\infty}^\infty f(u) \Re \frac{\Gamma'}{\Gamma}\left(\frac{1}{4}+\frac{\delta_\chi}{2}+i\frac{u}{2}\right)\md u \\ 
        -\frac{1}{\pi} \sum_{n=1}^\infty \Re\left\{\chi(n)\wh{f}\left(\frac{\log n}{2\pi}\right)\right\}\frac{\Lambda(n)}{\sqrt{n}},
    \end{multline}
    where the sum on the left-hand side runs over all non-trivial zeros of $L(s,\chi)$, $\delta_\chi$ was defined in \eqref{zero counting}, and $\Lambda(n)$ denotes the von Mangoldt function, which takes on the value $\log p$ when $n=p^m$ is a prime power and 0 otherwise. 
\end{lemma}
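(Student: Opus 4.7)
The plan is to apply the residue theorem to $g(s)\tfrac{\xi'}{\xi}(s,\chi)$, where $g(s):=f\bigl(\tfrac{s-1/2}{i}\bigr)$ and $\xi(s,\chi):=(q/\pi)^{(s+\delta_\chi)/2}\Gamma\bigl(\tfrac{s+\delta_\chi}{2}\bigr)L(s,\chi)$ is the completed $L$-function. The substitution $s\mapsto (s-1/2)/i$ carries the vertical strip $-\epsilon<\Re s<1+\epsilon$ to the horizontal strip $|\Im w|<1/2+\epsilon$, so $g$ is analytic in that vertical strip and inherits the polynomial decay of $f$ on horizontal lines. Since $L(s,\chi)$ is entire for non-principal $\chi$, $\xi(s,\chi)$ is entire with zeros precisely at the non-trivial $\rho_\chi$, and the functional equation $\xi(s,\chi)=\varepsilon_\chi\,\xi(1-s,\ov{\chi})$ yields $\tfrac{\xi'}{\xi}(1-s,\chi)=-\tfrac{\xi'}{\xi}(s,\ov{\chi})$.

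I would then integrate $g(s)\tfrac{\xi'}{\xi}(s,\chi)$ around the rectangle with vertical sides $\Re s=c$ and $\Re s=1-c$ for some $c\in(1,1+\epsilon)$, sending the height $T_k\to\infty$ along a sequence avoiding zeros of $\xi$. Once the horizontal pieces are shown to vanish, the residue theorem gives
\[
\sum_{\rho_\chi}g(\rho_\chi)=\frac{1}{2\pi i}\int_{(c)}g(s)\frac{\xi'}{\xi}(s,\chi)\md s+\frac{1}{2\pi i}\int_{(c)}g(1-s)\frac{\xi'}{\xi}(s,\ov{\chi})\md s,
\]
where the second term arises from the substitution $s\mapsto 1-s$ on the $\Re s=1-c$ side together with the functional equation. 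Feeding in the log-derivative decomposition $\tfrac{\xi'}{\xi}(s,\chi)=\tfrac12\log\tfrac{q}{\pi}+\tfrac12\tfrac{\Gamma'}{\Gamma}\bigl(\tfrac{s+\delta_\chi}{2}\bigr)+\tfrac{L'}{L}(s,\chi)$ splits each integral into three pieces. After shifting the constant and Gamma-factor pieces to the critical line $\Re s=1/2$ (where $g(s)$ reduces to $f(t)$) and using $\overline{\Gamma'/\Gamma(z)}=\Gamma'/\Gamma(\ov z)$ to symmetrize the two Gamma contributions, the first two pieces produce $\tfrac{\wh f(0)}{2\pi}\log(q/\pi)$ and the real-part integral in the statement.

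For the $L'/L$ piece on $\Re s=c>1$, I expand $\tfrac{L'}{L}(s,\chi)=-\sum_n\chi(n)\Lambda(n)n^{-s}$ and swap sum with integral; each Mellin-like integral $\tfrac{1}{2\pi i}\int_{(c)}g(s)n^{-s}\md s$ collapses, after shifting to the critical line, to a multiple of $n^{-1/2}\wh{f}(\log n/(2\pi))$, producing the $\chi$-part of the prime sum. The companion integral with $g(1-s)$ against the $\ov{\chi}$-Dirichlet series produces the $\ov{\chi}$-part with $\wh f$ now evaluated at $-\log n/(2\pi)$; together they yield the full prime contribution. The main technical obstacle will be justifying (i) the vanishing of the horizontal contour pieces, which requires the classical bound $\xi'/\xi(\sigma+iT_k,\chi)\ll\log q(|T_k|+2)$ on a sequence of heights combined with the decay of $g$, and (ii) the termwise interchange of summation and integration, which follows from the absolute convergence of the Dirichlet series for $\Re s>1$ together with the polynomial decay of $f$ (and hence $g$) on horizontal lines within its strip of analyticity.
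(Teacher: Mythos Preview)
The paper does not give a proof of this lemma; it simply cites \cite[Theorem~5.12]{IwaKow}. Your proposal is a correct outline of the standard contour-integration derivation of Weil's explicit formula that underlies that reference: integrate $g(s)\tfrac{\xi'}{\xi}(s,\chi)$ around a tall rectangle, use the functional equation to fold the left edge onto the right, decompose $\xi'/\xi$ into its conductor, Gamma and Dirichlet-series constituents, and shift each piece to the critical line where $g$ becomes $f$ and the Mellin integrals become Fourier transforms. The two technical points you flag (vanishing of the horizontal segments via $\xi'/\xi\ll\log q(|T_k|+2)$ on well-chosen heights together with the decay of $g$, and the sum--integral interchange via absolute convergence for $\Re s>1$) are exactly the right ones. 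So your approach is not ``different'' so much as it is the actual argument behind the citation; there is nothing to compare beyond noting that the paper treats the result as a black box whereas you sketch its proof. If you carry this out in full you should track the constants carefully to match the $-\tfrac{1}{\pi}$ in front of the prime sum and the $\tfrac{1}{2\pi}$ in front of the archimedean terms, since this is where sign and factor-of-two slips most often occur.
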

\begin{proof}
    See, e.g., \cite[Theorem 5.12]{IwaKow}.
\end{proof}

Note that since we assume GRH, the left-hand side of \eqref{explicit formula} can be replaced by $\sum_{\gamma_\chi} f(\gamma_\chi)$.

The next two lemmas are concerned with the prime sum in \eqref{explicit formula} when applied to $R^{\pm}$. For our purposes, we shall assume that $\Delta$ is large, of size $\gg \log q$.

\begin{lemma}\label{lemma: prime sum}
    For any fixed $\varepsilon>0$,
    \begin{align*}
        \frac{1}{q-2} 
        \sum_{\substack{\chi \bmod q \\ \chi\neq \chi_0}}\sum_{n=1}^\infty\Re\{\chi(n)\}\wh{R^{\pm}}\left(\frac{\log n}{2\pi}\right)\frac{\Lambda(n)}{\sqrt{n}} \ll 
        \begin{cases}
            \dfrac{e^{\pi \Delta}}{q}, & \text{if}\:\:2\pi\Delta<(1+\varepsilon)\log q;\\
            \dfrac{e^{\pi \Delta}}{q\Delta}, &\text{if}\:\: 2\pi\Delta\geq (1+\varepsilon)\log q.
        \end{cases}
    \end{align*}
\end{lemma}

\begin{proof}
    By Lemma~\ref{lemma: Beurling selberg} (v) and the orthogonality relation of characters
    \begin{equation}\label{ortho}
        \sum_{\substack{\chi\bmod q\\\chi\neq \chi_0}}\chi(n)=
        \begin{cases}
            q-2 & \text{if $n\equiv 1 \bmod q$},\\
            0 & \text{if $n\equiv 0\bmod q$},\\
            -1 & \text{otherwise},
        \end{cases}
    \end{equation} 
    the sum over $n=p$ where $p$ is a prime equals
    \begin{align*}
        =& \sum_{\substack{p\leq e^{2\pi \Delta}\\ p\equiv 1\bmod q}}\wh{R^{\pm}}\left(\frac{\log p}{2\pi}\right)\frac{\log p}{\sqrt{p}} + \frac{1}{q-2}\sum_{p\leq e^{2\pi \Delta}}\wh{R^{\pm}}\left(\frac{\log p}{2\pi}\right)\frac{\log p}{\sqrt{p}}\\
        \ll& \sum_{\substack{p\leq e^{2\pi \Delta}\\ p\equiv 1\bmod q}}\frac{1}{\sqrt{p}} + \frac{1}{q}\sum_{p\leq e^{2\pi \Delta}}\frac{1}{\sqrt{p}}.
    \end{align*}
    In the last line we used the fact that $|u\wh{R^\pm}(u)|=O(1)$, which is implied by Lemma~\ref{lemma: Beurling selberg} (vi). The second term above is $\ll\frac{e^{\pi\Delta}}{q\Delta}$ by the prime number theorem. For the first sum, recall that by the Brun--Titchmarsh theorem \cite{MV2}, $\pi(x;q,1)\leq \frac{2x}{\phi(q)\log(x/q)}$ for $x>2q$ where $\phi(q)$ denotes Euler's totient function and $\pi(x;q,a)$ denotes the number of primes $p\leq x$ such that $p\equiv a\bmod q$. Therefore, if $2\pi\Delta\geq (1+\varepsilon)\log q$, then this sum is $\ll \frac{e^{\pi\Delta}}{q\Delta}$, and it is always $\ll \frac{e^{\pi\Delta}}{q}$ by the trivial bound $\pi(x;q,1)\leq x/q$. Consequently, we see that the contribution from primes satisfies the stated estimate. In a similar vein, the contribution from higher prime powers is
    \[
    \ll \sum_{m=2}^\infty \sum_{\substack{p\leq e^{\frac{2\pi\Delta}{m}} \\ p\equiv 1\bmod q}}\frac{1}{mp^{m/2}}+\frac{1}{q}\sum_{m=2}^\infty\sum_{p\leq e^{\frac{2\pi\Delta}{m}}} \frac{1}{mp^{m/2}}\ll \frac{\Delta}{q}+\frac{\log \Delta}{q}\ll \frac{\Delta}{q},
    \]
    which is subsumed by the main term.
\end{proof}

\begin{lemma}\label{lemma: square of prime sum}
    \begin{multline*}
        \frac{1}{q-2} 
        \sum_{\substack{\chi \bmod q \\ \chi\neq \chi_0}} \left(\sum_{n=1}^\infty\Re\{\chi(n)\}\wh{R^\pm}\left(\frac{\log n}{2\pi}\right)\frac{\Lambda(n)}{\sqrt{n}}\right)^2 \\
        \leq \left\lceil \frac{e^{2\pi\Delta}}{q}\right\rceil \left(2\pi^2 \int_{\frac{\log 2}{2\pi}}^\Delta u \wh{R^\pm}(u)^2\md u+O\left(\min\left\{1,T^2+\Delta^{-2}\right\}\right)\right)+O\left(\frac{e^{2\pi\Delta}}{q\sqrt{\Delta}}\right).
    \end{multline*}
\end{lemma}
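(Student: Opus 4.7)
The plan is to expand the square, apply orthogonality of characters, and sort the resulting double sum into a diagonal/near-diagonal contribution (which produces the main term multiplied by $\lceil e^{2\pi\Delta}/q\rceil$) and off-diagonal contributions (which produce the $e^{2\pi\Delta}/(q\sqrt{\Delta})$ error). Writing $2\Re(\chi(n))=\chi(n)+\ov{\chi}(n)$ and squaring yields
\begin{equation*}
4\Bigl(\sum_n\Re(\chi(n))\,a(n)\Bigr)^2=2\sum_{m,n}\Re\chi(mn)\,a(m)a(n)+2\sum_{m,n}\Re(\chi(m)\ov{\chi}(n))\,a(m)a(n),
\end{equation*}
where I set $a(n):=\wh{R^\pm}(\log n/(2\pi))\Lambda(n)/\sqrt{n}$, which is supported on prime powers $n\leq e^{2\pi\Delta}$ by Lemma~\ref{lemma: Beurling selberg} (v). Averaging the right-hand side against \eqref{ortho} leaves three regimes: (i) $m\equiv n\pmod q$ with $(n,q)=1$, coming from $\chi(m)\ov{\chi}(n)$, with weight $+1$; (ii) $mn\equiv 1\pmod q$ with $(mn,q)=1$, from $\chi(mn)$, weight $+1$; (iii) the remaining pairs with $(mn,q)=1$, weighted by $-1/(q-2)$.

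For regime (i), set $A(c):=\sum_{m\equiv c\pmod q,\,(m,q)=1}a(m)$, so that its contribution is $\sum_{(c,q)=1}A(c)^2$. Since the number of prime powers $\leq e^{2\pi\Delta}$ in the class $c\bmod q$ is at most $\lceil e^{2\pi\Delta}/q\rceil$, Cauchy\textendash Schwarz gives $A(c)^2\leq\lceil e^{2\pi\Delta}/q\rceil\sum_{m\equiv c}a(m)^2$; summing over $c$ and incorporating the overall factor $\tfrac{1}{2}$ from the expansion above bounds this regime by $\lceil e^{2\pi\Delta}/q\rceil\cdot\tfrac{1}{2}\sum_{(m,q)=1}a(m)^2$. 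Higher prime powers $p^k$, $k\geq 2$, contribute an absolutely convergent $O(1)$, and the remaining prime-square piece $\tfrac{1}{2}\sum_{p\neq q}\wh{R^\pm}(\log p/(2\pi))^2(\log p)^2/p$ evaluates by partial summation against Mertens' estimate $\sum_{p\leq x}(\log p)/p=\log x+O(1)$ to $2\pi^2\int_{\log 2/(2\pi)}^\Delta u\wh{R^\pm}(u)^2\,du+O(\min\{1,T^2+\Delta^{-2}\})$. Here the $\min$ absorbs the boundary contribution $\int_0^{\log 2/(2\pi)}u\wh{R^\pm}(u)^2\,du$ using $\wh{R^\pm}(0)=2T\pm\Delta^{-1}$ from Lemma~\ref{lemma: Beurling selberg} (vi), with the trivial alternative $O(1)$ coming from $\|\wh{R^\pm}\|_\infty\ll 1$.

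Regimes (ii) and (iii) are both controlled by $(q-2)^{-1}\bigl(\sum_p|a(p)|\bigr)^2$. By Cauchy\textendash Schwarz,
\[
\Bigl(\sum_{p\leq e^{2\pi\Delta}}|a(p)|\Bigr)^2\leq\pi(e^{2\pi\Delta})\sum_{p}\wh{R^\pm}(\log p/(2\pi))^2\frac{(\log p)^2}{p}\ll\frac{e^{2\pi\Delta}}{\Delta}(1+\log\Delta),
\]
using the prime number theorem on the first factor and the main-term evaluation on the second; dividing by $q-2$ and noting $\log\Delta/\Delta\ll 1/\sqrt{\Delta}$ yields the stated $O(e^{2\pi\Delta}/(q\sqrt{\Delta}))$ error (the restriction in (ii) to pairs with $mn\equiv 1\pmod q$ only improves the bound). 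The main obstacle I anticipate is attaining the sharp forms of both error terms: the $1/\sqrt{\Delta}$ savings in the off-diagonal (a naive $\ell^1$ bound $\sum|a(p)|\ll e^{\pi\Delta}$ would only yield $e^{2\pi\Delta}/q$, losing the extra factor), and the uniform $\min\{1,T^2+\Delta^{-2}\}$ error, which requires interpolating between Lemma~\ref{lemma: Beurling selberg} (vi)'s asymptotic $\wh{R^\pm}(u)\approx\sin(2\pi Tu)/(\pi u)$ and the trivial bound according to whether $T$ is small or large compared to $1/\Delta$.
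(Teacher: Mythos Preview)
Your overall strategy matches the paper's: expand the square, apply orthogonality, separate the sum according to whether $m\equiv n$, $mn\equiv 1$, or neither holds modulo $q$, and evaluate the $m\equiv n$ piece via Cauchy\textendash Schwarz (the paper uses the equivalent AM\textendash GM inequality $2a(m)a(n)\le a(m)^2+a(n)^2$). The main-term computation and the treatment of regime (iii) are fine.

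There is, however, a genuine gap in your handling of regime~(ii). You assert that the contribution of pairs with $mn\equiv 1\pmod q$ is ``controlled by $(q-2)^{-1}\bigl(\sum_p|a(p)|\bigr)^2$'', and justify this only with the parenthetical remark that ``the restriction in (ii) to pairs with $mn\equiv 1\pmod q$ only improves the bound''. But regime~(ii) carries weight $+\tfrac12$, not $-\tfrac{1}{q-2}$: dropping the restriction gives at most $\tfrac12\bigl(\sum_m|a(m)|\bigr)^2$, with no factor of $1/q$ anywhere. The $1/q$ saving does not come for free from the restriction; it must be extracted from the \emph{sparsity} of pairs satisfying $mn\equiv 1\pmod q$, and this requires an argument. (Indeed, if the mass of $|a(\cdot)|$ were concentrated in a single residue class $c$ with $c^2\equiv 1$, the restricted sum would equal the unrestricted one.)

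The paper supplies exactly this missing step: using $|u\,\wh{R^\pm}(u)|\ll 1$ to reduce to $\sum_{p_1p_2\equiv 1}(p_1p_2)^{-1/2}$, it then observes that for each fixed prime $p_1\le e^{2\pi\Delta}$ there are at most $e^{2\pi\Delta}/q$ primes $p_2\le e^{2\pi\Delta}$ in the class $p_1^{-1}\bmod q$, so the total number of pairs is $\ll \pi(e^{2\pi\Delta})\cdot e^{2\pi\Delta}/q$; combined with $p_1p_2\ge q+1$ this yields the $O\bigl(e^{2\pi\Delta}/(q\sqrt{\Delta})\bigr)$ bound. You need an argument of this type (or an equivalent equidistribution input such as Brun\textendash Titchmarsh) to close the gap.
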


\begin{proof}
    Writing 
    \[
    4\cdot \Re\{\chi(n_1)\}\Re\{\chi(n_2)\}=\chi(n_1)\chi(n_2)+\chi(n_1)\ov{\chi}(n_2)+\ov{\chi}(n_1)\chi(n_2)+\ov{\chi}(n_1)\ov{\chi}(n_2)
    \]
    and using orthogonality \eqref{ortho}, we can expand the square and bound the left-hand side by
    \begin{multline}\label{second moment two terms}
        \leq \frac{1}{2}\sum_{\substack{n_1n_2\equiv 1\bmod q\\ n_i\leq e^{2\pi\Delta}}}\wh{R^\pm}\left(\frac{\log n_1}{2\pi}\right)\wh{R^\pm}\left(\frac{\log n_2}{2\pi}\right) \frac{\Lambda(n_1)\Lambda(n_2)}{\sqrt{n_1n_2}}\\
        +\frac{1}{2}\sum_{\substack{n_1\equiv n_2\bmod q\\ n_i\leq e^{2\pi\Delta}}}\wh{R^\pm}\left(\frac{\log n_1}{2\pi}\right)\wh{R^\pm}\left(\frac{\log n_2}{2\pi}\right) \frac{\Lambda(n_1)\Lambda(n_2)}{\sqrt{n_1n_2}}\\
        +O\left(\frac{1}{q}\left(\sum_{p\leq e^{2\pi\Delta}}\wh{R^\pm}\left(\frac{\log p}{2\pi}\right) \frac{\log p}{\sqrt{p}}\right)^2\right).
    \end{multline}
    The error term is $O(\frac{e^{2\pi\Delta}}{q\Delta^2})$ again because $|u\wh{R^\pm}(u)|=O(1)$. The first term is
    \begin{align*}
        \ll \sum_{\substack{kq+1=p_1p_2 \\ p_i\leq e^{2\pi\Delta}}} \frac{1}{\sqrt{kq+1}}\ll \frac{1}{\sqrt{q}}\sqrt{\frac{e^{2\pi\Delta}}{\Delta}\frac{e^{2\pi\Delta}}{q}}=\frac{e^{2\pi\Delta}}{q\sqrt{\Delta}},
    \end{align*}
   where the second inequality is justified by the observation that there are $\ll \frac{e^{2\pi\Delta}}{\Delta}$ choices of $p_1$, and for each fixed $p_1$ there are at most $\frac{e^{2\pi\Delta}}{q}$ choices of $p_2$ such that $p_1p_2\equiv 1\bmod q$. Lastly, the second term in \eqref{second moment two terms} is
   \begin{align*}
        \leq& \frac{1}{4}\sum_{\substack{n_1\equiv n_2\bmod q\\ n_i\leq e^{2\pi\Delta}}} \left[\wh{R^\pm}\left(\frac{\log n_1}{2\pi}\right)^2\frac{\Lambda(n_1)^2}{n_1}+\wh{R^\pm}\left(\frac{\log n_2}{2\pi}\right)^2\frac{\Lambda(n_2)^2}{n_2}\right]\\
        \leq & \frac{1}{2} \left\lceil \frac{e^{2\pi\Delta}}{q}\right\rceil \left(\sum_{p\leq e^{2\pi\Delta}}\wh{R^\pm}\left(\frac{\log p}{2\pi}\right)^2\frac{(\log p)^2}{p} + O\left(\min\left\{1,T^2+\Delta^{-2}\right\}\right)\right)\\
        =&\left\lceil \frac{e^{2\pi\Delta}}{q}\right\rceil \left(2\pi^2 \int_{\frac{\log 2}{2\pi}}^\Delta u \wh{R^\pm}(u)^2\md u + O\left(\min\left\{1,T^2+\Delta^{-2}\right\}\right)\right),
    \end{align*}
    where we used $|\wh{R^{\pm}}(u)|\ll \min\{u^{-1},T+\Delta^{-1}\}$ to estimate the contribution from higher prime powers in the big-$O$ term. This proves the lemma.
\end{proof}

We now proceed to the proofs of our main theorems.

\section{Proofs of Theorems~\ref{theorem 1} and \ref{theorem 2}}

\subsection{Proof of Theorem~\ref{theorem 1}}
Writing  $N(T,\chi)=\sum_{\rho_\chi} \mathbbm{1}_{[-T,T]}(\gamma_\chi)$, we have, by \eqref{zero counting} and Lemma~\ref{lemma: Beurling selberg} (i),
\begin{align*}
    \mathbb{E}[\wt{S}(T,\chi)]
    =&\frac{1}{q-2}\sum_{\substack{\chi \bmod q \\ \chi\neq \chi_0}} N(T,\chi)-\frac{T}{\pi} \log\frac{q}{\pi} 
    \\
    &\hspace{3cm}-\frac{1}{2\pi(q-2)}\sum_{\substack{\chi \bmod q \\ \chi\neq \chi_0}} \int_{-T}^T \frac{\Gamma'}{\Gamma}\left(\frac{1}{4}+\frac{\delta_\chi}{2}+i\frac{u}{2}\right)\md u \\
    \leq & \frac{1}{q-2} \sum_{\substack{\chi \bmod q \\ \chi\neq \chi_0}}\sum_{\rho_\chi} R^+(\gamma_\chi)-\frac{T}{\pi} \log\frac{q}{\pi}\\
    &\hspace{3cm}-\frac{1}{2\pi(q-2)} \sum_{\substack{\chi \bmod q \\ \chi\neq \chi_0}} \int_{-T}^T \frac{\Gamma'}{\Gamma}\left(\frac{1}{4}+\frac{\delta_\chi}{2}+i\frac{u}{2}\right)\md u.
\end{align*}
After replacing the sum over zeros using the explicit formula \eqref{explicit formula}, we rewrite the above as
\begin{multline*}
    =\frac{\wh{R^+}(0)}{2\pi}\log \frac{q}{\pi}-\frac{T}{\pi}\log\frac{q}{\pi}-\frac{1}{\pi(q-2)} 
    \sum_{\substack{\chi \bmod q \\ \chi\neq \chi_0}}\sum_{n=1}^\infty\Re\{\chi(n)\}\wh{R^+}\left(\frac{\log n}{2\pi}\right)\frac{\Lambda(n)}{\sqrt{n}}\\
    +\frac{1}{2\pi(q-2)}\sum_{\substack{\chi \bmod q \\ \chi\neq \chi_0}}\int_{-\infty}^{\infty} \left(R^+(u)-\mathbbm{1}_{[-T,T]}(u)\right) \frac{\Gamma'}{\Gamma}\left(\frac{1}{4}+\frac{\delta_\chi}{2}+i\frac{u}{2}\right)\md u.
\end{multline*}
The first two terms add up to $\frac{1}{2\pi\Delta}\log \frac{q}{\pi}$. The prime sum can be handled by Lemma~\ref{lemma: prime sum}. To bound the integral we break it into two ranges and apply Stirling's formula 
\[
\Re\frac{\Gamma'}{\Gamma}\left(\frac{1}{4}+\frac{\delta_\chi}{2}+i\frac{u}{2}\right)=\log(|u|+1)+O(1)
\]
together with Lemma~\ref{lemma: Beurling selberg} (ii) and (iv). It follows that
\begin{align*}
    &\int_{|u|\leq 2T+1} \left(R^+(u)-\mathbbm{1}_{[-T,T]}(u)\right) \frac{\Gamma'}{\Gamma}\left(\frac{1}{4}+\frac{\delta_\chi}{2}+i\frac{u}{2}\right)\md u\leq \frac{\log(T+1)+O(1)}{\Delta},\\
    &\int_{|u|> 2T+1} \left(R^+(u)-\mathbbm{1}_{[-T,T]}(u)\right) \frac{\Gamma'}{\Gamma}\left(\frac{1}{4}+\frac{\delta_\chi}{2}+i\frac{u}{2}\right)\md u\ll \frac{\log(T+1)+O(1)}{\Delta^2(T+1)}.
\end{align*}
Combining these estimates, we arrive at
\begin{align*}
    \mathbb{E}[\wt{S}(T,\chi)] \leq &\frac{\log q(T+1)}{2\pi \Delta}+O\left(\frac{e^{\pi \Delta}}{q\Delta}+\frac{1}{\Delta}\right),
\end{align*}
provided $2\pi\Delta\geq (1+\varepsilon)\log q$ (see Lemma~\ref{lemma: prime sum}).
    Upon choosing $\pi\Delta=\log (q\log q(T+1))-\log\log (q\log q(T+1))$, the above expression is at most
\begin{align*}
    \leq &\frac{1}{2}+\frac{1}{2}\frac{\log(T+1)-\log \log q(T+1)+\log\log (q\log q(T+1))}{\log(q\log q(T+1))-\log\log (q\log q(T+1))}\\ &\hspace{6cm}+O\left(\frac{\log q(T+1)}{[\log(q\log q(T+1))]^2}\right)\\
    =&\frac{1}{2}+\frac{\log(T+1)}{2\log (q\log q(T+1))}+O\left(\frac{\log(T+1)\log\log (q\log q(T+1))+\log q(T+1)}{(\log q)^2+(\log\log q(T+1))^2}\right)\\
    =&\frac{1}{2}+\frac{\log(T+1)}{2\log (q\log(T+3))}+O\left(\frac{\log q+\log(T+1)\log\log (q\log (T+3))}{(\log q)^2+(\log\log (T+3))^2}\right).
\end{align*}

A similar argument involving $R^-$ yields the desired lower bound, thereby finishing the proof.

\subsection{Proof of Theorem~\ref{theorem 2}}  
Following the same lines of reasoning as in the proof of Theorem~\ref{theorem 1}, and using the simple fact that if $a\leq b\leq c$ where $a,b,c\in \mathbb{R}$, then $b^2\leq a^2+c^2$, we have $\mathbb{E}[\wt{S}(T,\chi)^2]\leq I^++I^-$ where
\[
I^{\pm}=\frac{1}{q-2}\sum_{\substack{\chi \bmod q \\ \chi\neq \chi_0}} \Bigg(\sum_{\rho_\chi} R^{\pm}(\gamma_\chi)-\frac{T}{\pi} \log\frac{q}{\pi}-\frac{1}{2\pi} \int_{-T}^T \frac{\Gamma'}{\Gamma}\left(\frac{1}{4}+\frac{\delta_\chi}{2}+i\frac{u}{2}\right)\md u \Bigg)^2.
\]
Lemmas~\ref{lemma: Beurling selberg}, \ref{lemma: explicit formula} and \ref{lemma: square of prime sum} together imply that
\begin{align*}
    I^{\pm}
    &= \frac{1}{q-2}\sum_{\substack{\chi \bmod q \\ \chi\neq \chi_0}} \Bigg(\pm\frac{1}{2\pi\Delta}\log\frac{q}{\pi}+\frac{1}{2\pi}\int_{-\infty}^{\infty} \left(R^{\pm}(u)-\mathbbm{1}_{[-T,T]}(u)\right) \frac{\Gamma'}{\Gamma}\left(\frac{1}{4}+\frac{\delta_\chi}{2}+i\frac{u}{2}\right)\md u \\
    &\hspace{5cm}-\frac{1}{\pi} \sum_{n=1}^\infty\Re\{\chi(n)\}\wh{R^{\pm}}\left(\frac{\log n}{2\pi}\right)\frac{\Lambda(n)}{\sqrt{n}}\Bigg)^2\\
    &\leq \left(\frac{\log q(T+1)}{2\pi\Delta}+O(\Delta^{-1})\right)^2+\left\lceil \frac{e^{2\pi\Delta}}{q}\right\rceil \left(2\int_{\frac{\log 2}{2\pi}}^\Delta u \wh{R^\pm}(u)^2\md u+O\left(\min\left\{1,T^2+\Delta^{-2}\right\}\right)\right)\\
    &\hspace{2cm}+O\left(\frac{e^{2\pi\Delta}}{q\sqrt{\Delta}}\right)+\mc{E}_2,
\end{align*}
where $\mc{E}_2$ corresponds to the cross term when we expand the square. It follows from Lemma~\ref{lemma: prime sum} and the definition of $\delta_\chi$ that
\begin{align*}
    \mc{E}_2=&O\left(\frac{\log q}{\Delta}\frac{e^{\pi\Delta}}{q}\right)-\frac{1}{\pi^2(q-2)}\left(\int_{-\infty}^{\infty} \left(R^{\pm}(u)-\mathbbm{1}_{[-T,T]}(u)\right) \frac{\Gamma'}{\Gamma}\left(\frac{3}{4}+i\frac{u}{2}\right)\md u\right)\\
    &\hspace{4cm}\times \sum_{n=1}^\infty\sum_{\substack{\text{even}\:\chi\bmod q\\\chi\neq\chi_0}} \Re\{\chi(n)\}\wh{R^{\pm}}\left(\frac{\log n}{2\pi}\right)\frac{\Lambda(n)}{\sqrt{n}}\\
    &\hspace{2.5cm}-\frac{1}{\pi^2(q-2)}\left(\int_{-\infty}^{\infty} \left(R^{\pm}(u)-\mathbbm{1}_{[-T,T]}(u)\right) \frac{\Gamma'}{\Gamma}\left(\frac{1}{4}+i\frac{u}{2}\right)\md u\right)\\
    &\hspace{4cm}\times \sum_{n=1}^\infty\sum_{\substack{\text{odd}\:\chi\bmod q\\\chi\neq\chi_0}} \Re\{\chi(n)\}\wh{R^{\pm}}\left(\frac{\log n}{2\pi}\right)\frac{\Lambda(n)}{\sqrt{n}}.
\end{align*}
Using orthogonality relations for even and odd characters and proceeding as in the proof of Lemma~\ref{lemma: prime sum}, we have
\[
\mc{E}_2=O\left(\frac{\log q(T+1)}{\Delta}\frac{e^{\pi\Delta}}{q}\right).
\]
This establishes the first claim in Theorem~\ref{theorem 2}. By Lemma~\ref{lemma: Beurling selberg}(vi), 
\begin{align*}
    \int_{\frac{\log 2}{2\pi}}^\Delta u \wh{R^\pm}(u)^2\md u=&\int_{\frac{\log 2}{2\pi}}^\Delta \frac{\sin^2 (2\pi Tu)}{\pi^2 u}\md u+O(1)\\
    =&\int_{\frac{\log 2}{2\pi}T}^{\Delta T}\frac{\sin^2(2\pi u)}{\pi^2 u}\md u+O(1)\\
    \leq & \frac{1}{2\pi^2}\min \{\log (\Delta+1), \log (\Delta T+1)\}+O(1).
\end{align*}
If $T\leq q^N$, we find by taking $2\pi\Delta=\log q$ that
\[
\mathbb{E}[\wt{S}(T,\chi)^2] \leq I^++I^-\leq \frac{2}{\pi^2}\min\{\log\log q,\log(T\log q+1)\}+O_N(1),
\]
which proves the second claim \eqref{second moment bound}.

\section{Further remarks}
The restriction of $q$ to prime moduli throughout this paper can be removed as done in \cite{Zhao}. Moreover, the analysis we have carried out can be adapted for studying zeros near an arbitrary point $1/2+iT_0$ on the critical line. Analogous to \eqref{zero counting}, the number of zeros in the interval $[T_0-h,T_0+h]$ (again normalized at the endpoints) equals
\[   
\frac{h}{\pi}\log \frac{q}{\pi} + \wt{S}(T_0, h,\chi) + \frac{1}{2\pi} \int_{T_0-h}^{T_0+h} \Re \frac{\Gamma'}{\Gamma }\left(\frac{1}{4}+\frac{\delta_\chi}{2}+i\frac{u}{2}\right)\md u.
\]
where 
\[
\wt{S}(T_0,h,\chi):=S(T_0+h,\chi)-S(T_0-h,\chi).
\]
The task is then to bound the mean and mean square of the quantity $\wt{S}(T_0,h,\chi)$ using the Beurling--Selberg majorant and minorant for the interval $[T_0-h,T_0+h]$, defined by (with a slight abuse of notation)
\[
R^\pm(s):=\frac{B^{\pm}(\Delta(h-T_0+s))+B^{\pm}(\Delta(h+T_0-s))}{2}
\]
(c.f. \eqref{def Beurling-selberg}). Note that $\wh{R^{\pm}}$ are not real since $R^{\pm}$ are no longer even. Now
\[
\wh{R^\pm}(u)=e^{-2\pi iT_0u}\frac{\sin(2\pi h u)}{\pi u}+O(\Delta^{-1}), \quad |u|\leq \Delta,
\]
and in particular $\wh{R^\pm}(0)=2h\pm \Delta^{-1}$. However, since the explicit formula \eqref{explicit formula} still apply in this case, so does our subsequent argument after some straightforward modifications. For instance, in Lemma~\ref{lemma: square of prime sum}, we need to replace $\Re\{\chi(n)\}\wh{R^{\pm}}(\frac{\log n}{2\pi})$ by $\Re\{\chi(n)\wh{R^{\pm}}(\frac{\log n}{2\pi})\}$ on the left-hand side and $\int u\wh{R^{\pm}}(u)^2$ by $\int u|\wh{R^{\pm}}(u)|^2$ on the right-hand side. The estimates in both corollaries remain unchanged after we replace $1/2$ by $1/2+iT_0$ and $|\gamma_{\chi,0}|$ by the distance from $1/2+iT_0$ to the nearest zero. Finally, we can also replace the latter by the distance to the nearest zero above or below. For example, for $T_0=0$, this just says that our existence and proportion results are not weakened if we restrict to low-lying zeros with $\gamma\geq 0$ (or $\leq 0$).

\section{Acknowledgment}
I would like to thank Professor Ghaith Hiary for several interesting discussions. I also thank the anonymous referee for their suggestions.

\printbibliography

\end{document}